\newtheorem{theorem}{Theorem}[section]
\newtheorem{lemma}[theorem]{Lemma}
\newtheorem*{thmA}{Theorem A}
\newtheorem*{thmC}{Theorem C}
\newtheorem*{corB}{Corollary B}
\numberwithin{equation}{section}
\begin{document}
\title[A criterion for a normal subgroup to be hypercentral]{A criterion for a normal subgroup to be hypercentral based on class sizes}

    \author[Beltr\'an]{
       Antonio Beltr\'an\\
     Departamento de Matem\'aticas\\
      Universitat Jaume I \\
     12071 Castell\'on, Spain\\
        }

 \thanks{ Antonio Beltr\'an: abeltran@uji.es,                                                                                   ORCID: 0000-0001-6570-201X }

\keywords{Conjugacy class sizes, normal subgroups, nilpotent groups, hypercentre}

\subjclass[2010]{20D15, 20E45}

\begin{abstract}
Let $G$ be a finite group and $N$ a normal subgroup of $G$. We prove that the knowledge of the sizes of the conjugacy classes of $G$ that are contained in $N$ and
 of their multiplicities  provides information of $N$ in relation to the structure of $G$. Among other results, we obtain a criterion to determine whether a Sylow $p$-subgroup of $N$   lies in the hypercentre of $G$ for a fixed prime $p$, and therefore, whether the whole subgroup $N$ is hypercentral in $G$.

\end{abstract}

\maketitle

\section{Introduction}

It is well known that the set of conjugacy class sizes of a finite group may provide structural  information of it.  There exist multiple relevant results  that endorse it, for instance, It\^o's theorem about the nilpotency of groups having exactly two class sizes. The reader is referred to  \cite{CC}  for a good survey on this widely studied research topic.  But the knowledge of only the class sizes of a group may result quite restrictive  to get to know certain properties of it. In fact, the order of the group is unbounded, and its nilpotency or solvability cannot be determined either (\cite{CC2} and \cite{Nav}, respectively).   Nevertheless, when the multiplicities of the class sizes are provided as well, it seems that further information of the group could be obtained.   Indeed, a reference  work  by Cossey, Hawkes and Mann \cite{CHM} establishes that nilpotency  can be read  off from the knowledge of the class sizes and their multiplicities, however, recognising solvability and supersolvability of a group  still continue to be open problems.

Let $G$ be a finite group and $N\unlhd G$.  Instead of looking into all conjugacy classes of $G$, we restrict ourselves to  the conjugacy classes of $G$ contained in $N$, which are called $G$-classes of $N$. There also exist research works (see  \cite{AkBF} or \cite{ABF} for instance) showing that  a limited set of $G$-class sizes of $N$  may exert a strong influence on the structure of $N$. Our aim in this paper is to investigate  which properties of $N$  can be  extracted from  the $G$-class sizes together with their multiplicities. For this purpose,  inspired by the class size frequency function introduced in \cite{CHM}, we define the {\it $G$-class size frequency function} of $N$, $w^G_N: {\mathbb N}\longrightarrow {\mathbb N}$, as follows:
   $$w^G_N(n)=\frac{1}{n}|\{g\in N : |g^G|=n\}|,$$
that is, for every $n\in {\mathbb N}$, $w^G_N(n)$ is the number of  $G$-classes contained in $N$ that have cardinality $n$. This function trivially provides arithmetical
properties of $N$; its order and hence, the order of its Sylow subgroups, as well as the number of elements of $N$ that are central in $G$. No information of  $G$, whenever $N<G$, seems to be able to be recovered.  Even though  the $G$-class sizes and their multiplicities may involve primes that do not divide the order of $N$, we show, however, that the $G$-class size frequency function of $N$  can still provide   properties of $N$ with regard to its immersion in $G$. Our first result is

\begin{thmA} Let $G$ be a finite group and $N$ a normal subgroup of $G$. Then the $G$-class sizes of $N$ and their multiplicities determine whether $N$ is hypercentral in $G$.
\end{thmA}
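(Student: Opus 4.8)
The plan is to exhibit a condition on the function $w^G_N$ that is equivalent to the inclusion $N\le Z_\infty(G)$, where $Z_\infty(G)$ denotes the hypercentre of $G$; once such a condition is isolated, Theorem~A is immediate. I would first reduce to a single prime. Since $G$ is finite, $Z_\infty(G)$ is nilpotent, and $Z_\infty(G)\cap N$ is normal in $G$, so if it contains one Sylow $p$-subgroup of $N$ it contains them all; as $N$ is generated by its Sylow subgroups, $N\le Z_\infty(G)$ holds if and only if every Sylow $p$-subgroup $P$ of $N$ satisfies $P\le Z_\infty(G)$. Thus it suffices, for each prime $p\mid |N|$, to decide the hypercentrality of $P$ from the data $w^G_N$.

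The group-theoretic heart is the criterion: $P\le Z_\infty(G)$ if and only if every $p$-element of $N$ has $G$-class size a power of $p$. For necessity, if $P\le Z_\infty(G)$ then every $p$-element of $N$ lies in the normal $p$-subgroup $O_p(Z_\infty(G))$, through which $G$ stabilises a $G$-chief series with central factors; since the stability group of such a series is a $p$-group, $G/C_G(O_p(Z_\infty(G)))$ is a $p$-group, so $O^p(G)$ centralises every such $x$ and $|x^G|=|G:C_G(x)|$ is a power of $p$. Sufficiency is the more technical direction: arguing by contradiction via coprime action, if some $p'$-element failed to centralise $P$ one produces a $p$-element $x\in N$ that is moved by every Sylow $q$-subgroup, so that $q\mid |x^G|$ for some $q\ne p$ — using that a finite group is never the union of the conjugates of a proper subgroup. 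In particular this criterion already yields a necessary condition that \emph{is} visible in $w^G_N$: if $N\le Z_\infty(G)$ then every $G$-class size of $N$ is a $\pi$-number, i.e.\ all its prime divisors lie in $\pi:=\{p:p\mid |N|\}$; when $N$ is a $p$-group this alone characterises hypercentrality.

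The genuine difficulty is that $w^G_N$ records class \emph{sizes} with multiplicities but not the \emph{orders} of elements, so ``every $p$-element has $p$-power class size'' is not directly readable: a class size equal to a power of $p$ may be contributed by elements that are not $p$-elements. My way around this is to exploit the multiplicities through a multiplicativity argument in the spirit of Cossey--Hawkes--Mann \cite{CHM}. If $N\le Z_\infty(G)$ then $N=\prod_{p\in\pi}P_p$ with each $P_p\unlhd G$, and for $g=\prod_p x_p$ the centralisers $C_G(x_p)$ have pairwise coprime indices, whence $|g^G|=\prod_p|x_p^G|$ with each $|x_p^G|$ a power of $p$. Consequently $w^G_N$ becomes \emph{multiplicative}: writing $n_p$ for the $p$-part of $n$ and $\mu_p(p^e)=\sum_{n:\,n_p=p^e} n\,w^G_N(n)$ for the number of elements of $N$ whose class-size $p$-part is $p^e$, one obtains $|N|^{\,|\pi|-1}\,n\,w^G_N(n)=\prod_{p\in\pi}\mu_p(p^{e_p})$ for every $\pi$-number $n=\prod_p p^{e_p}$. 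Every quantity here is computable from $w^G_N$, so this factorisation can be tested.

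The main obstacle, and the core of the proof, will be the converse: showing that if all class sizes are $\pi$-numbers and $w^G_N$ satisfies this multiplicative pattern, then $N$ is forced to be hypercentral in $G$. I expect this to be the delicate point, since a non-hypercentral configuration might a priori reproduce the correct marginal counts by coincidence; one must prove that multiplicativity \emph{fails} as soon as some $p$-element carries a prime $q\ne p$ in its class size, so that the presence of such an element introduces an unavoidable correlation between the $p$-part and $q$-part of the class-size distribution. Here I would adapt the inductive counting machinery of \cite{CHM} — which characterises nilpotency of a group precisely by this multiplicativity of its class-size frequency function — to the relative setting of $G$-classes of $N$, and combine it with the single-prime criterion above to convert the numerical factorisation into the statement that every $p$-element of $N$ genuinely has $p$-power class size, and hence that $P\le Z_\infty(G)$ for every $p$.
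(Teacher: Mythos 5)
Your reduction to one prime at a time is correct, and the group-theoretic criterion you isolate (a normal Sylow $p$-subgroup of $N$ lies in ${\bf Z}_{\infty}(G)$ if and only if every $p$-element of $N$ has $G$-class size a power of $p$) is true; you have also correctly located the real difficulty, namely that $w^G_N$ does not reveal element orders, so this criterion cannot be read off directly. But the proposal stops precisely at that point. Everything you actually establish --- that all $G$-class sizes of a hypercentral $N$ are $\pi(N)$-numbers, and the factorisation $|N|^{|\pi|-1}\, n\, w^G_N(n)=\prod_{p}\mu_p(p^{e_p})$ --- is only a \emph{necessary} condition. The converse, that a non-hypercentral $N$ must violate one of these numerical conditions, is announced as something you ``would adapt'' from \cite{CHM} and ``expect to be the delicate point''; it is not carried out, and it is the entire content of the theorem. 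You give no argument excluding the possibility that a non-hypercentral configuration reproduces the same multiplicative pattern, so as it stands this is a strategy with the decisive step missing, not a proof.

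For contrast, the paper bypasses the multiplicativity route entirely. It works with $\mathcal{S}_{p}^G(N)$, the set of \emph{all} elements of $N$ (of arbitrary order) whose $G$-class size is a power of $p$ --- a quantity computable from $w^G_N$ --- and proves the exact identity $|\mathcal{S}_{p}^G(N)|_p=|N\cap {\bf Z}_{\infty}(G)|_p$ by induction on $|N|$: using Lemma \ref{l3} one finds $M\leq N\cap{\bf Z}(G)$ of order $p$, shows $|\mathcal{S}_{p}^G(N)|=p\,|\mathcal{S}_{p}^{G/M}(N/M)|$ by classifying how $G$-classes of $N$ lie over $G/M$-classes of $N/M$, and concludes; hypercentrality of the Sylow $p$-subgroup of $N$ is then exactly the testable equality $|\mathcal{S}_{p}^G(N)|_p=|N|_p$. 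Some such exact counting is what your outline still needs. A secondary flaw: your sketch of the sufficiency direction of the single-prime criterion via ``a finite group is never the union of the conjugates of a proper subgroup'' does not work, because what is required is that the normal $p$-subgroup $P$ not be covered by the $G$-conjugates of the proper subgroup ${\bf C}_P(Q)$, and a group \emph{can} be covered by conjugates of a proper subgroup when the conjugating group is strictly larger (the Klein four group is covered by its three subgroups of order two, permuted transitively by a suitable overgroup). The clean argument is inductive: for $P_0\in{\rm Syl}_p(G)$ containing $P$, any $1\neq x\in P\cap{\bf Z}(P_0)$ has class size both a $p$-power and a $p'$-number, hence lies in ${\bf Z}(G)$, and one passes to $G/(P\cap{\bf Z}(G))$.
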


 By using properties of the hypercentre of a group, Theorem A yields to the following corollary, which shows how the $G$-class size frequency function of a normal subgroup  $N$ gives information about  the external behaviour  of the prime-power order elements of $N$ with respect to $G$.

\begin{corB} Let $G$ be a finite group, $N$ a normal subgroup and $p$ a prime. Then the $G$-class sizes of $N$ and their multiplicities determine whether all $p$-elements of $N$ belong to ${\bf C}_N({\bf O}^p(G))$.
\end{corB}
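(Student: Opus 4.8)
The plan is to translate the centralizer condition in Corollary B into a statement about the hypercentre $Z_{\infty}(G)$, and then to invoke the criterion---obtained in the course of proving Theorem A---that detects, from the $G$-class sizes of $N$ together with their multiplicities, whether the Sylow $p$-subgroup of $N$ lies in $Z_{\infty}(G)$. The bridge between the two formulations is the following purely group-theoretical equivalence, which I would record (or cite) as a preliminary lemma: a $p$-element $x\in G$ satisfies $x\in {\bf C}_G({\bf O}^p(G))$ if and only if $x\in Z_{\infty}(G)$.

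For the implication $x\in Z_{\infty}(G)\Rightarrow x\in {\bf C}_G({\bf O}^p(G))$, I would first pass to $P_0:={\bf O}_p(Z_{\infty}(G))$, the Sylow $p$-subgroup of the nilpotent group $Z_{\infty}(G)$: it is characteristic in $Z_{\infty}(G)$, hence normal in $G$, it contains $x$, and it lies in $Z_{\infty}(G)$. Intersecting $P_0$ with the upper central series of $G$ yields a $G$-invariant series of $P_0$ all of whose factors are centralized by $G$, in particular by $Q:={\bf O}^p(G)$. By Hall's theorem the stability group of a chain of a $p$-group is itself a $p$-group, so the image of $Q$ in $\mathrm{Aut}(P_0)$ is a $p$-group; since $Q$ is generated by its $p'$-elements, each such generator maps to a $p'$-element of a $p$-group and therefore acts trivially, giving $[P_0,Q]=1$ and in particular $[x,Q]=1$. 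For the converse I would use the standard characterization of the hypercentre as the intersection of the centralizers of the chief factors of $G$, and verify that a $p$-element $x$ with $[x,Q]=1$ centralizes every chief factor $H/K$: working in $\bar G=G/K$ with $\bar Q=QK/K$, either $\bar H\le\bar Q$, where $x$ centralizes $H/K$ because it centralizes $Q$, or $\bar H\cap\bar Q=1$ by minimality, in which case $\bar Q$ centralizes $\bar H$, the action of $\bar G$ on $\bar H$ factors through the $p$-group $\bar G/\bar Q$, and since $\bar H\bar Q/\bar Q$ is then a minimal normal subgroup of a $p$-group it is central, so $\bar G$ (hence $x$) centralizes $H/K$.

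Granting the lemma, the remaining reduction is routine. Since ${\bf C}_N({\bf O}^p(G))=N\cap {\bf C}_G({\bf O}^p(G))$ and the $p$-elements in question already lie in $N$, the lemma shows that all $p$-elements of $N$ lie in ${\bf C}_N({\bf O}^p(G))$ exactly when they all lie in $Z_{\infty}(G)$; and because $Z_{\infty}(G)$ is normal in $G$ while the Sylow $p$-subgroups of $N$ are $G$-conjugate, this holds if and only if one (equivalently every) Sylow $p$-subgroup of $N$ is contained in $Z_{\infty}(G)$. Thus Corollary B is equivalent to the assertion that the $G$-class sizes of $N$ and their multiplicities determine whether the Sylow $p$-subgroup of $N$ is hypercentral in $G$, which is precisely the Sylow-$p$ criterion underlying Theorem A; applying it completes the argument. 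I expect the main obstacle to be the forward direction of the lemma, where the content is that $x$ must act not merely nilpotently but \emph{trivially} on $Q$: this is exactly where the facts that ${\bf O}^p(G)$ is generated by $p'$-elements and that the stability group of a chain in a $p$-group is a $p$-group do the decisive work, and it is the step most likely to demand care in the non-soluble case.
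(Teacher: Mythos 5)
Your overall route coincides with the paper's: reduce Corollary B to the Sylow-$p$ criterion of Theorem~\ref{th1}(b) via the equivalence, for a $p$-element $x$, between $x\in{\bf C}_G({\bf O}^p(G))$ and $x\in{\bf Z}_\infty(G)$. The paper simply cites this equivalence (Lemma~\ref{l2}, taken from Cossey--Hawkes--Mann and phrased as ``$x$ commutes with every $p'$-element of $G$'') together with the observation that ${\bf O}^p(G)$ is generated by the $p'$-elements of $G$, whereas you re-prove it. Your closing reduction (conjugacy of the Sylow $p$-subgroups of $N$ plus normality of ${\bf Z}_\infty(G)$) is fine, and so is your proof of the forward implication via the stability group of the chain $P_0\cap{\bf Z}_i(G)$.

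The gap is in the converse. The ``standard characterization'' you invoke is false: the intersection of the centralizers of the chief factors of $G$ is not ${\bf Z}_\infty(G)$. For $G=S_3$ the chief factors are $A_3/1$ and $G/A_3$, whose centralizers intersect in $A_3$, while ${\bf Z}_\infty(S_3)=1$; that intersection is (for a soluble group) the Fitting subgroup. What is true is that a \emph{normal} subgroup $X$ lies in ${\bf Z}_\infty(G)$ if and only if all of $G$ --- not just $x$ --- centralizes every chief factor below $X$; in your first case ($\bar H\le\bar Q$) you only obtain that $x$ centralizes $H/K$, so the conclusion $x\in{\bf Z}_\infty(G)$ does not follow. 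The repair is short: set $X=\langle x^G\rangle$. Since $Q={\bf O}^p(G)\unlhd G$ and $[x,Q]=1$, every conjugate $x^g$ centralizes $Q$, so $X\le{\bf C}_G(Q)$; hence $X\cap Q\le{\bf Z}(X)$ and $X/(X\cap Q)\cong XQ/Q$ is a $p$-group, so $X$ is nilpotent and, being generated by $p$-elements, is a normal $p$-subgroup of $G$. Then $G/{\bf C}_G(X)$ is a quotient of the $p$-group $G/Q$, and Lemma~\ref{l1} gives $X\le{\bf Z}_\infty(G)$. (Alternatively, simply cite Lemma~\ref{l2}, which is exactly the equivalence you are re-deriving.)
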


Inspired by \cite{Mat}, we obtain the next result relative to commutativity in $G$ of certain prime-power order elements of $N$.

\begin{thmC} Suppose that $G$ is a finite group and $N$ is a normal subgroup of $G$. Then the $G$-class sizes of $N$ and their multiplicities determine whether all elements of $N$ lying in the centre of some Sylow subgroup of $G$ belong to ${\bf Z}(G)$.
  \end{thmC}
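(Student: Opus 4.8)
The plan is to first reinterpret, purely group-theoretically, which elements of $N$ are being tested, and then reduce the assertion to a counting statement about the frequency function. An element $x\in N$ lies in the centre of some Sylow $p$-subgroup of $G$ exactly when $x$ is a $p$-element with $p\nmid|x^G|$: if $x\in {\bf Z}(P)$ then $x\in P$ and $P\leq {\bf C}_G(x)$, whence $p\nmid|x^G|$, while conversely a $p$-element with $p\nmid|x^G|$ has ${\bf C}_G(x)$ containing a Sylow $p$-subgroup $P$, and then $x\in {\bf Z}(P)$. Writing $Z_p$ for the set of such elements (for a fixed prime $p$), the crucial observation is a centrality criterion: for $x\in Z_p$ one has $x\in {\bf Z}(G)$ if and only if $x\in {\bf C}_G({\bf O}^p(G))$. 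Indeed ${\bf C}_G(x)$ already contains a Sylow $p$-subgroup $P$ and $G={\bf O}^p(G)P$, so ${\bf C}_G(x)=G$ precisely when ${\bf O}^p(G)\leq {\bf C}_G(x)$. Thus the property to be detected is equivalent, prime by prime, to the statement that $Z_p\subseteq {\bf C}_N({\bf O}^p(G))$, which places Theorem C squarely within the hypercentral circle of ideas used for Corollary B, the difference being that here we only constrain the elements of $Z_p$ rather than all $p$-elements of $N$.

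Next I would set up what the frequency function sees. Since $|N|=\sum_n n\,w^G_N(n)$, the function $w^G_N$ determines $|N|$ and hence the primes $p$ dividing $|N|$; for $p\nmid|N|$ the only $p$-element of $N$ is the identity, so no condition arises and it suffices to treat each $p\mid|N|$ separately. The number of elements of $N$ lying in ${\bf Z}(G)$ is $w^G_N(1)$, so $w^G_N$ determines the integer $|{\bf Z}(G)\cap N|$ and hence its $p$-part $c_p:=|{\bf Z}(G)\cap N|_p$, which is exactly the number of central $p$-elements of $N$. Every central $p$-element lies in $Z_p$ (its class size is $1$, trivially coprime to $p$), so $c_p\leq|Z_p|$ always, with equality if and only if $Z_p\subseteq {\bf Z}(G)$. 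Hence, for each $p\mid|N|$, the sought property is equivalent to the numerical equality $|Z_p|=c_p$, and the whole theorem reduces to showing that the validity of this equality is encoded in $w^G_N$.

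The heart of the argument, and the step I expect to be the main obstacle, is therefore to recover the number $|Z_p|$ of $p$-elements of $N$ with $G$-class size coprime to $p$ from the multiset of class sizes alone. The difficulty is that $w^G_N$ records class sizes, not element orders: the set $A_p:=\{x\in N:\ p\nmid|x^G|\}$ is visible, since its cardinality is $\sum_{p\nmid n} n\,w^G_N(n)$ and it is a union of $G$-classes closed under powers and under taking $p$- and $p'$-parts, but deciding which of its elements are $p$-elements cannot be done term by term; one checks on small examples (a dihedral group of order $10$ with $p=2$) that $|Z_p|$ is not the $p$-part of $|A_p|$. The approach I would pursue is a Frobenius-type count inside $A_p$: partitioning $A_p$ according to the $p'$-part $r$ of its elements, the fibre over $r=1$ is exactly $Z_p$, while the fibre over $r\neq1$ is governed by $p$-elements of the smaller group ${\bf C}_G(r)$, in which a full Sylow $p$-subgroup of $G$ is retained. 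The crux is that these centralisers are not directly visible in $w^G_N$, so one must show that the global numerical constraints carried by the class-size data — together with Frobenius' divisibility for the number of $p$-elements — already determine the single fibre $|Z_p|$, equivalently the truth of $|Z_p|=c_p$, without access to the centraliser data. I would model this counting on the technical lemma underlying Theorem A.

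Combining the two directions then closes the proof: if $w^G_N$ forces $|Z_p|=c_p$ for every $p\mid|N|$, then every element of each $Z_p$ is central and the property holds, while if $|Z_p|>c_p$ for some $p$ then a non-central element lies in the centre of a Sylow $p$-subgroup and it fails. Since hypercentrality of $N$ already guarantees $Z_p\subseteq {\bf C}_N({\bf O}^p(G))$ for all $p$, Theorem C refines Theorem A and Corollary B, and the genuinely new content is precisely the recovery of the counts $|Z_p|$ from the $G$-class size frequency function.
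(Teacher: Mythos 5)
Your reduction is correct and matches the right target: an element of $N$ lies in the centre of some Sylow $p$-subgroup of $G$ exactly when it is a $p$-element whose $G$-class size is prime to $p$, and the property to be detected is equivalent, prime by prime, to the numerical equality $|Z_p|=|{\bf Z}(G)\cap N|_p$. But the proof stops exactly where the theorem begins. The entire content of the statement is that the truth of this equality can be read off from $w^G_N$, and your proposal does not establish this: you correctly observe that $|Z_p|$ is not visible term by term inside $A_p=\mathcal{S}_{p'}^G(N)$, you correctly observe that the centralisers governing the fibres of your Frobenius-type partition are not visible in $w^G_N$ either, and then you state that ``one must show'' the class-size data nevertheless determine the answer, without showing it. That is the gap, and it is the whole theorem. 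Moreover, the route you sketch (recovering the single number $|Z_p|$ from a fibration of $A_p$ by $p'$-parts) is likely harder than necessary and possibly not achievable: nothing in the paper, and nothing in your argument, indicates that $|Z_p|$ itself is an invariant of the frequency function; only the yes/no question $|Z_p|=c_p$ needs to be, and these are different claims.

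For comparison, the paper's mechanism avoids counting $Z_p$ altogether. It works with the full set $\mathcal{S}_{p'}^G(N)=\bigcup_{P\in{\rm Syl}_p(G)}{\bf C}_N(P)$, whose cardinality \emph{is} visible in $w^G_N$. Fixing one Sylow $p$-subgroup $P$ and setting $Z=P\cap N\cap{\bf Z}(G)$, one shows via the commutator-homomorphism trick (as in Theorem A) that centralisers of elements of $\mathcal{S}_{p'}^G(N)$ behave well modulo $Z$, so that $P/Z$ acts on the cosets of $Z$ contained in $\mathcal{S}_{p'}^G(N)$ with fixed points exactly the cosets in ${\bf C}_N(P)/Z$. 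The fixed-point congruence then gives $|\mathcal{S}_{p'}^G(N)|/|Z|\equiv|{\bf C}_N(P)|/|Z| \ ({\rm mod}\ p)$, and since ${\bf Z}(P)\cap N$ is a Sylow $p$-subgroup of ${\bf C}_N(P)$, the condition $|\mathcal{S}_{p'}^G(N)|_p=|N\cap{\bf Z}(G)|_p$ is equivalent to ${\bf Z}(P)\cap N\leq{\bf Z}(G)$, i.e.\ to $Z_p\subseteq{\bf Z}(G)$. Both sides of that numerical test are computable from $w^G_N$, which is what closes the proof. Some device of this kind --- a congruence relating a $w^G_N$-visible quantity to ${\bf C}_N(P)$ --- is what your proposal still needs.
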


We will show by an example that the $G$-class size frequency function of a normal subgroup does not detect whether such subgroup is abelian, nor the derived length when the normal subgroup is solvable or the nilpotency class when it is nilpotent. We have been unable to decide, however, whether nilpotency can be recognised. Of course, determining solvability or supersolvability, as happens with  class sizes and their multiplicities in a group, are likewise open problems. We encourage the reader to continue with this research.

\section{Notation and Preliminaries}
Let $G$ be a finite group and $N$ a normal subgroup. For our convenience, if  $\pi$ is a set of primes, we will write $\mathcal{S}_{\pi}^G(N)$  to denote the subset of elements of $N$ whose $G$-class has cardinality a $\pi$-number. The cardinality of this set is easily computed from  the $G$-class size frequency function of $N$, $w^G_N$, defined in the Introduction, by the formula
$$|\mathcal{S}_{\pi}^G(N)|=\sum_{n_{\pi'}= 1} w^G_N
(n) \cdot n,$$
where $n_{\pi'}$ denotes the $\pi'$-part of $n\in{\mathbb N}$.  We will specially focus on the cases $\pi=\{p\}$ and $\pi=\{p\}'$  with $p$ prime, and will write $\mathcal{S}_p^G(N)$ and $\mathcal{S}_{p'}^G(N)$  to denote the corresponding sets.

\bigskip
 We  recall the definition of the hypercentre of a group \cite[p. 7-8]{Huppert}. Let $G$ be a finite group and $1\leq {\bf Z}_1(G)\leq {\bf Z}_2(G)\leq\ldots$ be a series of subgroups of $G$,  where ${\bf Z}_1(G)$ is the centre of $G$ and ${\bf Z}_{i+1}(G)$ for $i\geq 1$ is  defined by ${\bf Z}_{i+1}(G)/{\bf Z}_i(G)={\bf Z}(G/{\bf Z}_i(G))$. Then the hypercentre of $G$, ${\bf Z}_{\infty}(G)$, is the last term of this series, and is nilpotent and characteristic in $G$. A group $G$ is nilpotent if and only if $G={\bf Z}_{\infty}(G)$.

 Before giving the proofs of our results, we need to state and develop several basic lemmas about hypercentres and normal subgroups.
The first property is renowned and the second, which originally appeared in  a slightly modified form  in \cite[Lemma 2]{Peng}, is a consequence of it, which is often more convenient to apply.

\begin{lemma}[p. 739, \cite{Huppert2}] \label{l1} Let $G$ be a finite group. A normal $p$-subgroup $N$ of $G$ lies in ${\bf Z}_\infty(G)$ if and only if $G/{\bf C}_G(N)$ is a $p$-group.
\end{lemma}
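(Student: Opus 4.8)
The plan is to reduce everything to the action of $G$ on the descending chain of commutator subgroups and to invoke two elementary facts about $p$-groups. First I would record the standard characterisation $N\leq {\bf Z}_k(G)$ if and only if $[N,{}_kG]=1$, where $[N,{}_kG]$ denotes the $k$-fold commutator $[\,[\cdots[N,G],\ldots],G\,]$ with $k$ copies of $G$; this follows by induction from ${\bf Z}_1(G)={\bf Z}(G)$ and from the equivalence $N\leq {\bf Z}_k(G)\iff [N,G]\leq {\bf Z}_{k-1}(G)$, which is immediate from the definition ${\bf Z}_k(G)/{\bf Z}_{k-1}(G)={\bf Z}(G/{\bf Z}_{k-1}(G))$. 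Writing $M_i:=[N,{}_iG]$, each $M_i$ is a normal subgroup of $G$ contained in $N$, hence a normal $p$-subgroup, and the chain $N=M_0\geq M_1\geq M_2\geq\cdots$ satisfies $[M_i,G]=M_{i+1}$, so that $G$ acts trivially on every factor $M_i/M_{i+1}$. Everything will hinge on this chain.

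For the forward implication, suppose $N\leq {\bf Z}_\infty(G)$. Then $N\leq {\bf Z}_k(G)$ for some $k$, so $M_k=1$ and we obtain a finite normal series $1=M_k\leq\cdots\leq M_0=N$ all of whose factors are $p$-groups centralised by $G$. Consequently $G/{\bf C}_G(N)$ is contained in the stability group of this series (the automorphisms acting trivially on each factor), and I would conclude by the classical Kaloujnine--Hall theorem that the stability group of a series of $p$-groups is itself a $p$-group; hence $G/{\bf C}_G(N)$ is a $p$-group.

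For the converse, assume $P:=G/{\bf C}_G(N)$ is a $p$-group and show that the chain $(M_i)$ descends strictly to $1$. If $M:=M_i\neq 1$, then $M$ is a nontrivial normal $p$-subgroup and, since ${\bf C}_G(N)\leq {\bf C}_G(M)$, the group $G$ acts on the $\mathbb{F}_p$-vector space $M/\Phi(M)$ through a $p$-group. Because a $p$-group acting on a nonzero $\mathbb{F}_p$-module fixes a nonzero vector (the trivial module being the only simple module in characteristic $p$), the commutator subspace $[M/\Phi(M),G]$ is proper, which forces $[M,G]<M$, that is $M_{i+1}<M_i$. Thus $M_k=1$ for some $k$, whence $N\leq {\bf Z}_k(G)\leq {\bf Z}_\infty(G)$, as required.

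I expect the genuine obstacle to be the input used in the forward direction, namely that the stability group of a chain with $p$-group factors is a $p$-group. The converse direction is comparatively soft, resting only on the Frattini quotient and the fact that a $p$-group has no nontrivial irreducible $\mathbb{F}_p$-module; but the stability-group statement is the one nontrivial structural ingredient, and I would either cite it directly or, if a self-contained argument were wanted, prove it by the usual descending induction on the length of the series together with a coprimality count eliminating every prime $q\neq p$ from $|G/{\bf C}_G(N)|$.
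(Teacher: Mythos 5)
Your argument is essentially correct, but there is nothing in the paper to compare it with: the lemma is quoted from \cite[p.~739]{Huppert2} as a known fact and the paper supplies no proof. What you have written is the standard self-contained proof via the series $M_i=[N,{}_iG]$, and both directions hold up. The converse is airtight: ${\bf C}_G(N)\leq {\bf C}_G(M)\leq {\bf C}_G(M/\Phi(M))$, so the action on the Frattini quotient is through a $p$-group, whose coinvariants on a nonzero ${\mathbb F}_p$-module are nonzero (all composition factors are trivial), giving $[M,G]\Phi(M)<M$ and hence $[M,G]<M$; finiteness of $N$ then forces $M_k=1$ and $N\leq {\bf Z}_k(G)$. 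The one point to repair is the attribution in the forward direction: the Kaloujnine--Hall theorem says the stability group of a series of length $k$ in an \emph{arbitrary} group is nilpotent of class at most $\binom{k}{2}$, which by itself does not yield that the group is a $p$-group. The statement you actually need is the coprime stability lemma: an automorphism of order prime to $p$ that centralizes every factor of a chain in a finite $p$-group is trivial. That is exactly the ``descending induction plus coprimality count'' you sketch at the end, and it is two lines: if $\alpha^q=1$ with $q\neq p$ and $\alpha$ centralizes $M_{i+1}$ and $M_i/M_{i+1}$, then for $g\in M_i$ one has $g^\alpha=gm$ with $m\in M_{i+1}$ fixed by $\alpha$, so $g=g^{\alpha^q}=gm^q$, and $m^q=1$ forces $m=1$. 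With that citation corrected or that argument inserted, your proof is complete, and it is arguably more useful than the paper's bare reference. (A shorter forward direction would follow from the paper's Lemma~\ref{l2}, since it gives ${\bf O}^p(G)\leq {\bf C}_G(N)$, but that lemma is itself derived from the present one, so your independent argument rightly avoids the circularity.)
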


\begin{lemma}[Lemma 2, \cite{CHM}] \label{l2} A $p$-element $x$ of a finite group $G$ lies in ${\bf Z}_\infty(G)$ if and only if $x$ commutes with each $p'$-element of $G$.
\end{lemma}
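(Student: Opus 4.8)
The plan is to reduce both directions to Lemma~\ref{l1} by producing an appropriate normal $p$-subgroup of $G$. The basic observation is that the subgroup generated by all $p'$-elements of $G$ is exactly ${\bf O}^p(G)$, so the condition ``$x$ commutes with every $p'$-element of $G$'' is equivalent to $x\in{\bf C}_G({\bf O}^p(G))$. I shall use repeatedly that ${\bf C}_G({\bf O}^p(G))$ is normal in $G$ (being the centraliser of a normal subgroup), that $G={\bf O}^p(G)P$ for any Sylow $p$-subgroup $P$ (since $G/{\bf O}^p(G)$ is a $p$-group), and that a finite group is a $p$-group precisely when it contains no nontrivial $p'$-element.

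For the forward implication I would take the Sylow $p$-subgroup $P_0$ of ${\bf Z}_\infty(G)$. As ${\bf Z}_\infty(G)$ is nilpotent and characteristic in $G$, $P_0$ is characteristic in ${\bf Z}_\infty(G)$ and hence a normal $p$-subgroup of $G$ that contains the $p$-element $x$. Lemma~\ref{l1} then yields that $G/{\bf C}_G(P_0)$ is a $p$-group, so the image of any $p'$-element $y$ is trivial; that is, $y\in{\bf C}_G(P_0)\leq{\bf C}_G(x)$, and $x$ commutes with every $p'$-element.

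The converse carries the main content. Assuming $x\in{\bf C}_G({\bf O}^p(G))$, I would form the normal closure $N=\langle x^G\rangle$ and first show it is a $p$-group. Writing a given $g\in G$ as $g=us$ with $u\in{\bf O}^p(G)$ and $s\in P$, the factorisation gives $x^{g}=(x^{u})^{s}=x^{s}\in P$, using $x^u=x$ because $x$ centralises ${\bf O}^p(G)$; hence every conjugate of $x$ lies in the single Sylow subgroup $P$, and $N\leq P$ is a normal $p$-subgroup of $G$. To apply Lemma~\ref{l1} it then remains to check that $G/{\bf C}_G(N)$ is a $p$-group. Since ${\bf C}_G({\bf O}^p(G))$ is normal and contains $x$, it contains every generator $x^g$ of $N$, so $N\leq{\bf C}_G({\bf O}^p(G))$ and therefore ${\bf O}^p(G)\leq{\bf C}_G(N)$; in particular every $p'$-element of $G$ centralises $N$, which forces $G/{\bf C}_G(N)$ to have no nontrivial $p'$-element and so to be a $p$-group. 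Lemma~\ref{l1} now gives $N\leq{\bf Z}_\infty(G)$, whence $x\in{\bf Z}_\infty(G)$.

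The step I expect to be the crux is the claim that the normal closure $\langle x^G\rangle$ is a $p$-group: this is where the hypothesis is genuinely used, and it hinges on the clean $p/p'$-factorisation of the conjugating elements together with the normality of ${\bf C}_G({\bf O}^p(G))$. Once that is in place, both the identification of the $p'$-elements inside ${\bf C}_G(N)$ and the final appeal to Lemma~\ref{l1} are routine.
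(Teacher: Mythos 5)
Your proposal is correct. Note, though, that the paper does not actually prove this lemma: it is imported verbatim from [CHM] (with the remark that it originally appeared in Peng's work and ``is a consequence of'' Lemma~\ref{l1}), so there is no in-paper argument to compare against. What you have written is precisely the deduction from Lemma~\ref{l1} that the paper alludes to and leaves to the references: the forward direction via the normal Sylow $p$-subgroup $P_0$ of the nilpotent characteristic subgroup ${\bf Z}_\infty(G)$, and the converse via the normal closure $\langle x^G\rangle$, using that the $p'$-elements generate ${\bf O}^p(G)$ (a fact the paper itself invokes in the proof of Corollary~B) and that $G={\bf O}^p(G)P$. Both halves are sound; the only cosmetic gap is in the converse, where you should fix $P$ to be a Sylow $p$-subgroup \emph{containing} $x$ before concluding $x^g=x^s\in P$, and the final step is cleanest phrased as: ${\bf O}^p(G)\leq{\bf C}_G(N)$ makes $G/{\bf C}_G(N)$ a quotient of the $p$-group $G/{\bf O}^p(G)$, rather than arguing about $p'$-elements of the quotient.
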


It is well known that the primes dividing $|{\bf Z}_{\infty}(G)|$ are exactly  those dividing $|{\bf Z}(G)|$. The same occurs when one intersects with any normal subgroup of $G$.

\begin{lemma} \label{l3} Let $G$ be a finite group and $N$ a normal subgroup of $G$. If a prime $p$ divides $|N\cap {\bf Z}_\infty(G)|$, then $p$ divides $|{\bf Z}(G)\cap N|$.
\end{lemma}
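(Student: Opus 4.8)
The plan is to produce an honest nontrivial central $p$-element of $G$ lying in $N$. Put $H={\bf Z}_\infty(G)\cap N$. Since ${\bf Z}_\infty(G)$ is characteristic, hence normal, in $G$, the subgroup $H$ is normal in $G$; being contained in the nilpotent group ${\bf Z}_\infty(G)$, it is itself nilpotent. Assuming $p$ divides $|H|$, I would let $P$ be the Sylow $p$-subgroup of $H$. Nilpotency of $H$ makes $P$ unique, so $P$ is characteristic in $H$ and therefore normal in $G$; moreover $1\neq P\leq N$ and $P\leq {\bf Z}_\infty(G)$.

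The heart of the argument is to show that $P\cap {\bf Z}(G)\neq 1$. I would invoke Lemma~\ref{l1}: as $P$ is a normal $p$-subgroup of $G$ contained in ${\bf Z}_\infty(G)$, the quotient $G/{\bf C}_G(P)$ is a $p$-group. This $p$-group acts by conjugation on the nontrivial $p$-group $P$, and a standard fixed-point count (the number of fixed points is congruent to $|P|$, hence to $0$, modulo $p$, while the identity provides at least one fixed point) shows that the set of fixed points, namely ${\bf C}_P(G)=P\cap {\bf Z}(G)$, has order divisible by $p$ and in particular is nontrivial.

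Alternatively, one can avoid Lemma~\ref{l1} and argue directly along the upper central series: let $k\geq 1$ be least with $P\cap {\bf Z}_k(G)\neq 1$ (such $k$ exists because $P\leq {\bf Z}_\infty(G)$ and $P\neq 1$). For $1\neq x\in P\cap {\bf Z}_k(G)$ and any $g\in G$ one has $[x,g]\in {\bf Z}_{k-1}(G)$ by definition of the upper central series, while $[x,g]\in P$ because $P$ is normal in $G$; hence $[x,g]\in P\cap {\bf Z}_{k-1}(G)=1$, forcing $k=1$ and $x\in P\cap {\bf Z}(G)$. Either way, $P\cap {\bf Z}(G)$ is a nontrivial $p$-subgroup contained in $N\cap {\bf Z}(G)$, so $p$ divides $|{\bf Z}(G)\cap N|$, as required.

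The only delicate point I anticipate is the normality of $P$ in $G$: it is precisely the nilpotency of the normal subgroup ${\bf Z}_\infty(G)\cap N$ that guarantees its Sylow $p$-subgroup is characteristic, and hence that the commutators $[x,g]$ remain inside $P$. Once this is secured, both the fixed-point count and the central-series induction go through routinely, so I would spend most care on justifying that $H$ is a nilpotent normal subgroup and that its Sylow $p$-subgroup is therefore normal in $G$.
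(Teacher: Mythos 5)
Your proposal is correct, and its setup coincides exactly with the paper's: both take the (unique, hence $G$-normal) Sylow $p$-subgroup of the nilpotent normal subgroup $N\cap {\bf Z}_\infty(G)$ and then locate a nontrivial central element inside it. Your first variant is essentially the paper's argument in different clothing: the paper applies Lemma \ref{l1} to get the factorization $G=P\,{\bf C}_G(P_0)$ with $P\in{\rm Syl}_p(G)$ and then observes $1\neq P_0\cap {\bf Z}(P)\leq {\bf Z}(G)$, while you apply the same lemma to get that $G/{\bf C}_G(P)$ is a $p$-group and run the fixed-point count directly; the two are interchangeable, since the paper's step $P_0\cap{\bf Z}(P)\neq 1$ is itself a fixed-point count and the factorization is just a restatement of $G/{\bf C}_G(P_0)$ being a $p$-group. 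Your second variant is genuinely different and arguably cleaner: by descending the upper central series you show that any nontrivial normal subgroup contained in ${\bf Z}_\infty(G)$ meets ${\bf Z}(G)$ nontrivially, with no appeal to Lemma \ref{l1} at all. What that buys is self-containedness and the observation that the $p$-group structure of $P$ is not really needed for the "meets the centre" step (only for guaranteeing the central intersection has order divisible by $p$, which here is automatic since $P$ is a nontrivial $p$-group); what the paper's route buys is a quick reuse of a lemma it needs elsewhere anyway. Your anticipated delicate point --- normality of $P$ in $G$ via nilpotency of $H$ --- is exactly the step the paper also singles out, and you justify it correctly.
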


\begin{proof} Suppose that $p$ divides $|N\cap {\bf Z}_\infty(G)|$ and take $P_0$ a Sylow $p$-subgroup of $N\cap {\bf Z}_\infty(G)$. Since $N\cap {\bf Z}_\infty(G)$ is nilpotent and normal in $G$, we have $P_0\unlhd G$. Now,  if $P$ is a Sylow $p$-subgroup of $G$,  it is clear that $P_0\leq P$, and hence $P_0\cap {\bf Z}(P)\neq 1$. By applying Lemma (\ref{l1}) to $P_0$, we get $G=P{\bf C}_G(P_0)$ and this implies that $1\neq P_0\cap {\bf Z}(P)\leq {\bf Z}(G)$. Since we also have $P_0\cap{\bf Z}(P)\leq N$, we conclude that $p$ divides $|N\cap {\bf Z}(G)|$.
\end{proof}

\section{Proofs}

We are ready to prove our results. We remark that our proof of Theorem A, which we state again in a more convenient form,   differs from that   originally appeared in \cite{CHM} to demonstrate that the nilpotency of a group can be recognised from the class size frequency function. We have adapted instead the ideas of \cite[Theorem 23.5]{Huppert}.

\begin{theorem} \label{th1}
Let $G$ be a finite group and $N$ a normal subgroup of $G$. Then the $G$-class sizes of $N$ and their multiplicities determine whether $N$ is hypercentral in $G$. Precisely, if $p$ is a prime number, then
\begin{itemize}

\item[(a)] $|\mathcal{S}_{p}^G(N)|_p=|N\cap {\bf Z}_{\infty} (G)|_p$, where ${\bf Z}_{\infty} (G)$ is the hypercentre of $G$.
\item[(b)] a Sylow $p$-subgroup of $N$ is hypercentral in $G$ if and only if $|\mathcal{S}_{p}^G(N)|_p=|N|_p$.
\end{itemize}
\end{theorem}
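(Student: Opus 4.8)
The plan is to prove part (a) first—the equality $|\mathcal{S}_p^G(N)|_p = |N \cap \mathbf{Z}_\infty(G)|_p$—since part (b), and with it the opening assertion, will follow by an elementary Sylow argument. Observe at the outset that both $|\mathcal{S}_p^G(N)|$ and $|N|$, hence their $p$-parts, are read off from $w_N^G$ via $|\mathcal{S}_p^G(N)| = \sum_{n \text{ a } p\text{-power}} w_N^G(n)\, n$, so once (a) is established the quantities compared in (b) are genuinely determined by the data. Write $S = \mathcal{S}_p^G(N)$ throughout.

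I would argue (a) by induction on $|G|$, and the engine is a single congruence. Since $S$ is a union of $G$-classes of $p$-power size, counting contributions to $|S|$ class by class gives
$$|S| \equiv |\mathbf{Z}(G) \cap N| \pmod p,$$
because the size-one classes are exactly the elements of $\mathbf{Z}(G)\cap N$ while every larger $p$-power class contributes a multiple of $p$. This settles the base case: if $p \nmid |S|$, then $p\nmid|\mathbf{Z}(G)\cap N|$, whence $p \nmid |N\cap\mathbf{Z}_\infty(G)|$ by Lemma \ref{l3}, so both sides of (a) have trivial $p$-part and agree.

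For the inductive step, suppose $p \mid |S|$. Then $p \mid |\mathbf{Z}(G)\cap N|$ by the congruence, so by Cauchy there is a central $z \in \mathbf{Z}(G) \cap N$ of order $p$; put $Z = \langle z\rangle \unlhd G$, $\bar G = G/Z$, $\bar N = N/Z$. The key point is that passing to $\bar G$ preserves the property of having $p$-power class size: for $x \in N$ the size $|\bar x^{\bar G}|$ divides $|x^G|$ (giving one direction), while conversely, if $D$ denotes the preimage in $G$ of $\mathbf{C}_{\bar G}(\bar x)$, then $[G:D]=|\bar x^{\bar G}|$ and all $D$-conjugates of $x$ lie in the coset $xZ$, so $[D:\mathbf{C}_G(x)]$ divides $|Z|=p$; hence $|x^G|=[G:D]\,[D:\mathbf{C}_G(x)]$ is a $p$-power whenever $|\bar x^{\bar G}|$ is. Thus $S$ is exactly the full preimage in $N$ of $\mathcal{S}_p^{\bar G}(\bar N)$, giving $|S| = p\,|\mathcal{S}_p^{\bar G}(\bar N)|$. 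Applying the inductive hypothesis to $(\bar G, \bar N)$ together with the standard behaviour of the hypercentre under a central quotient, $\mathbf{Z}_\infty(G/Z) = \mathbf{Z}_\infty(G)/Z$ (so that $\bar N \cap \mathbf{Z}_\infty(\bar G) = (N \cap \mathbf{Z}_\infty(G))/Z$), I obtain $|S|_p = p\,|\bar N \cap \mathbf{Z}_\infty(\bar G)|_p = p\cdot |N\cap\mathbf{Z}_\infty(G)|_p/p = |N \cap \mathbf{Z}_\infty(G)|_p$, completing (a).

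Finally, (b) drops out of (a): $|S|_p = |N|_p$ holds iff $|N\cap\mathbf{Z}_\infty(G)|_p = |N|_p$, i.e. iff the Sylow $p$-subgroup $Z_p$ of the nilpotent normal subgroup $N \cap \mathbf{Z}_\infty(G)$ has full order $|N|_p$; since $Z_p$ is characteristic in $N\cap\mathbf{Z}_\infty(G)$ and hence normal in $G$, it is then the unique Sylow $p$-subgroup of $N$ and lies in $\mathbf{Z}_\infty(G)$, which is precisely the assertion that a Sylow $p$-subgroup of $N$ is hypercentral in $G$ (the converse implication being immediate). Running this over all primes $p$ yields the opening statement. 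The step I expect to be the main obstacle is the backward direction of the quotient-invariance of $p$-power class size—the coset-orbit estimate $[D:\mathbf{C}_G(x)] \mid |Z|$—which is what makes the reduction $|S| = p\,|\mathcal{S}_p^{\bar G}(\bar N)|$ exact; the remaining ingredients are bookkeeping around the class-equation congruence and the behaviour of the hypercentre under central quotients.
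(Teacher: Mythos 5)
Your proposal is correct, and its skeleton coincides with the paper's: the same congruence $|\mathcal{S}_p^G(N)|\equiv |N\cap {\bf Z}(G)| \pmod p$, the same passage to a central subgroup $Z$ of order $p$ inside $N\cap{\bf Z}(G)$, the same key identity $|\mathcal{S}_p^G(N)|=p\,|\mathcal{S}_p^{\overline{G}}(\overline{N})|$, the same use of ${\bf Z}_\infty(G/Z)={\bf Z}_\infty(G)/Z$, and the same Sylow argument for (b). Where you genuinely diverge is in how the key identity is established. The paper classifies how each $\overline{G}$-class of $p$-power size lifts (either it splits into $p$ distinct $G$-classes of the same size, or its full preimage is a single $G$-class of $p$ times the size), splits the frequency function as $w_{\overline{N}}^{\overline{G}}(p^i)=w_1(p^i)+w_2(p^i)$ accordingly, and sums the two contributions. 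You instead observe that $\mathcal{S}_p^G(N)$ is a union of cosets of $Z$ and that an element of $N$ lies in it if and only if its image lies in $\mathcal{S}_p^{\overline{G}}(\overline{N})$, so that $\mathcal{S}_p^G(N)$ is the exact preimage and the factor $p$ appears at once; this bypasses the class-by-class bookkeeping entirely and is, to my mind, cleaner. Two small remarks. First, in your backward direction the assertion that $[D:{\bf C}_G(x)]$ divides $|Z|=p$ does not follow merely from $x^D\subseteq xZ$ (that alone only gives $[D:{\bf C}_G(x)]\le p$, and you genuinely need a $p$-power here); the quick fix is exactly the device the paper uses, namely that $u\mapsto [x,u]$ is a homomorphism $D\to Z$ with kernel ${\bf C}_G(x)$, so $D/{\bf C}_G(x)$ embeds in $Z$ (alternatively, all $D$-orbits on $xZ$ have equal length since $Z$ is central, so that common length divides $p$). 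Second, your base case routes through Lemma \ref{l3} in the contrapositive (from $p\nmid|{\bf Z}(G)\cap N|$ to $p\nmid|N\cap{\bf Z}_\infty(G)|$), whereas the paper splits cases on $|N\cap{\bf Z}_\infty(G)|$ and uses the lemma in the inductive branch; the two arrangements are equivalent.
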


\begin{proof}   We proceed by induction on $|N|$ to prove (a).  By definition  of the set $\mathcal{S}_{p}^G(N)$ we have
$$|\mathcal{S}_{p}^G(N)|\equiv |N\cap {\bf Z}(G)| \quad  ({\rm mod } \, p). $$
Suppose first that $p$ does not divide $|N\cap {\bf Z}_{\infty} (G)|$, so certainly $p$ does not divide $|N\cap {\bf Z}(G)|$ either. In this case the above congruence proves that
$|\mathcal{S}_{p}^G(N)|$ is a $p'$-number. Hence   $|\mathcal{S}_{p}^G(N)|_p=1= |N\cap {\bf Z}_{\infty} (G)|_p$, so (a) is proved.

Henceforth, we will assume that $p$ divides  $|N\cap {\bf Z}_{\infty} (G)|$. By Lemma (\ref{l3}), we have that $p$ divides $|N \cap {\bf Z}(G)|$, so we can take $M\leq N\cap {\bf Z}(G)$
such that $|M|=p$. We consider $\overline{G}=G/M$ with the normal subgroup $\overline{N}=N/M$. In the following we will prove

\begin{equation}\label{eq}
 |\mathcal{S}_{p}^{\overline{G}}(\overline{N})| \cdot p = |\mathcal{S}_{p}^G(N)|
\end{equation}

For every $g\in N$, we have
$${\bf C}_G(g)/M\leq {\bf C}_{\overline{G}}(\overline{g})=\overline{U}.$$
If $u\in U$, then we can write $g^u=gf(u)$, with $f(u)=[g,u]\in M$. As $M$ is central in $G$, then
$$f(uv)=[g, uv]=[g, v][g, u]^v=[g,u][g,v]=f(u)f(v),$$
for all $v\in U$. Thus, $f$ is a group homomorphism from $U$ to $M$, and its kernel is exactly ${\bf C}_G(g)$. Consequently,
$|U/{\bf C}_G(g)|$ divides $|M|=p$.

Take now $\overline{g}\in \mathcal{S}_{p}^{\overline{G}}(\overline{N})$, that is, take $\overline{g}\in \overline{N}$ satisfying $|\overline{g}^{\overline{G}}|=p^i$ for some $i\in {\mathbb N}$. By the above paragraph we know that  either $U={\bf C}_G(g)$ or $|U|=p|{\bf C}_G(g)|$. In the former case we have
$$|g^G|=|G:{\bf C}_G(g)|=|\overline{G}:{\bf C}_{\overline{G}}(\overline{g})|=p^i,$$
and in the latter,
$$|g^G|=|G:{\bf C}_G(g)|=p|G:U|= p|\overline{G}:{\bf C}_{\overline{G}}(\overline{g})|=p^{i+1}.$$

We consider the set of $\overline{G}$-classes of $\overline{N}$ of cardinality $p^i$, whose number is given by the $\overline{G}$-class frequency function, $w_{\overline{N}}^{\overline{G}}(p^i)$, and  the  correspondence, by taking bars,  between $G$-classes of $N$ and $\overline{G}$-classes of $\overline{N}$. Next we see that every $\overline{G}$-class of $\overline{N}$, with regard to pre-images,   gives rise to $G$-classes of $N$ in two different ways. 

Write $M=\langle z\rangle$ (with $z$ of order $p$) and let $\overline{g}\in \mathcal{S}_{p}^{\overline{G}}(\overline{N})$.  We have  seen above that either $|\overline{g}^{\overline{G}}|=|g^G|$ or $p|\overline{g}^{\overline{G}}|=|g^G|$. In the first case, ${\bf C}_{\overline{G}}(\overline{g})=\overline{{\bf C}_G(g)}$, and this easily leads to that $g^G$, $(gz)^G$, $\ldots$ , $(gz^{p-1})^G$ 
 are  $p$ distinct $G$-classes of $N$ that are pre-images  of ${\overline{g}}^{\overline{G}}$, with the same cardinality as 
 ${\overline{g}}^{\overline{G}}$. If $p|\overline{g}^{\overline{G}}|=|g^G|$, then the inverse image of ${\overline{g}}^{\overline{G}}$ is exactly the $G$-class of $N$, $g^G$, with cardinality  $p|{\overline{g}}^{\overline{G}}|$.

 Accordingly, for every $i\geq 0$, we  can decompose $w_{\overline{N}}^{\overline{G}}(p^i)$ into two summands
$$w_{\overline{N}}^{\overline{G}}(p^i)= w_1(p^{i}) +w_2(p^i),$$
where $w_1(p^{i})$ and $w_2(p^i)$ denote the number of $\overline{G}$-classes of $\overline{N}$ corresponding to each one of both types of classes.
 Therefore
$$|\mathcal{S}_{p}^G(N)|=\sum_{i=1}^{\infty} p^i w_1(p^{i-1}) + \sum_{i=0}^{\infty}p^i p w_2(p^i)=$$

$$=\sum_{i=1}^{\infty} p^i w_1(p^{i-1})+ \sum_{i=1}^{\infty}p^i w_2(p^{i-1})= p(\sum_{i=1}^{\infty}(p^{i-1}w_1(p^{i-1})+ p^{i-1}w_2(p^{i-1}))=$$
$$=p\sum_{i=1}^{\infty}p^{i-1} w_{\overline{N}}^{\overline{G}}(p^{i-1})=p|\mathcal{S}_{p}^{\overline{G}}(\overline{N})|, $$
so Eq. (\ref{eq}) is proved.

As $M\leq {\bf Z}(G)$, then the definition of the hypercentre implies that ${\bf Z}_\infty(\overline{G})=\overline{{\bf Z}_{\infty}(G)}$. Hence ${\bf Z}_{\infty}(\overline{G})\cap \overline{N}=({\bf Z}_{\infty}(G)\cap N)/M$. Then, we  utilize the inductive hypothesis and Eq. (\ref{eq}) to get
$$|{\bf Z}_{\infty}(G)\cap N|_p=p |{\bf Z}_{\infty}(\overline{G})\cap \overline{N}|_p=p |\mathcal{S}_{p}^{\overline{G}}(\overline{N})|_p= |\mathcal{S}_{p}^G(N)|_p,$$
and the proof of (a) is finished.

For proving (b), notice first that $|\mathcal{S}_{p}^G(N)|_p=|N|_p$ implies by (a)  that $|{\bf Z}_\infty(G)\cap N|_p=|N|_p$. Then, it trivially follows that a Sylow $p$-subgroup of $N$ is hypercentral in $G$. Conversely, if a Sylow $p$-subgroup $P$ of $N$ lies in ${\bf Z}_\infty(G)$, then
$$|P|=|N|_p\leq |{\bf Z}_\infty(G)\cap N|_p,$$
 so $|N|_p=|{\bf Z}_\infty(G)\cap N|_p$. By applying (a) we obtain the equality $|\mathcal{S}_{p}^G(N)|_p=|N|_p$, so (b) is proved.

 Now,  $N$ is hypercentral in $G$ if and only if every Sylow subgroup of $N$ is hypercentral in $G$, and this can be established by (b) because $|\mathcal{S}_{p}^G(N)|_p$ and $|N|_p$ are given by the $G$-class size frequency function of $N$. Therefore, the first assertion of the theorem is proved.
  \end{proof}

\noindent
{\it Remark.} Observe that $|N\cap {\bf Z}(G)|= w^G_N(1)$ and likewise, by Theorem \ref{th1}(a),  $|N\cap {\bf Z}_{\infty}(G)|$ can  be retrieved  from the $G$-class frequency function of $N$. We stress that, however, it is not possible in general to compute the intermediate terms $|N\cap {\bf Z}_i(G)|$ when $i>1$. This follows straightforwardly from \cite[Example 1]{CHM}. Nonetheless, we will give an easier example at the end of this section.
On the other hand, we note that  Theorem \ref{th1}(a) shows that a Sylow $p$-subgroup of $N$ is hypercentral in $G$ if and only if  $|\mathcal{S}_{p}^G(N)|_p$ is as large as possible.

 \begin{proof}[Proof of Corollary B]

By Theorem \ref{th1}(b), we know that the $G$-class size frequency function of $N$ determines whether the Sylow $p$-subgroups of $N$ lie in ${\bf Z}_{\infty}(G)$. In fact, when this occurs then $N$ has only one Sylow  $p$-subgroup. Thus, by Lemma (\ref{l2}), this function detects whether all $p$-elements of $N$ centralize each $p'$-element of $G$. Since ${\bf O}^p(G)$ is generated by  all $p'$-elements of $G$, the result follows.
 \end{proof}

We state again Theorem C in a more convenient manner.
\begin{theorem}
Suppose that $G$ is a finite group and $N$ is a normal subgroup of $G$. Then the $G$-class sizes of $N$ and their multiplicities determine whether all elements of $N$ lying in the centre of some Sylow subgroup of $G$ belong to ${\bf Z}(G)$. More precisely, if $p$ is a prime number, then

\begin{itemize}
\item[(a)] $|N\cap {\bf Z}(G)|$ divides $|\mathcal{S}_{p'}^G(N)|$.
\item[(b)] $|{\bf C}_N({\bf O}^{p'}(G))|$  divides $|\mathcal{S}_{p'}^G(N)|$.
\item[(c)] $|\mathcal{S}_{p'}^G(N)|_p=|N\cap {\bf Z}(G)|_p$ if and only if ${\bf Z}(P)\cap N \leq {\bf Z}(G)$, where $P$ is any Sylow $p$-subgroup of $G$.
\end{itemize}
\end{theorem}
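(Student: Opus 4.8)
The plan is to handle the three parts separately, deriving (a) and (b) from free multiplication actions and reserving the real work for (c), which I would obtain from an orbit-counting argument for a Sylow $p$-subgroup acting on a quotient set. For (a) and (b), recall that $g\in N$ lies in $\mathcal{S}_{p'}^G(N)$ exactly when $g$ centralizes some Sylow $p$-subgroup of $G$. Set $Z=N\cap\mathbf{Z}(G)$ and $C=\mathbf{C}_N(\mathbf{O}^{p'}(G))$. First I would let $Z$ act on $\mathcal{S}_{p'}^G(N)$ by left multiplication: since $\mathbf{C}_G(zg)=\mathbf{C}_G(g)$ for central $z$, the element $zg$ keeps the same ($p'$) class size, so the action is well defined, and it is plainly free; hence $|Z|$ divides $|\mathcal{S}_{p'}^G(N)|$, which is (a). For (b) I would run the same argument with $C$ in place of $Z$, the only nontrivial point being that left multiplication by $c\in C$ preserves $\mathcal{S}_{p'}^G(N)$: as $\mathbf{O}^{p'}(G)$ is generated by the $p$-elements of $G$, $c$ centralizes every Sylow $p$-subgroup, so if $g$ centralizes a Sylow $p$-subgroup $P$ then so does $cg$, whence $cg$ again has $p'$ class size. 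The action is free, so $|C|$ divides $|\mathcal{S}_{p'}^G(N)|$.

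For (c), fix a Sylow $p$-subgroup $P$, write $S=\mathcal{S}_{p'}^G(N)$ and $Z=N\cap\mathbf{Z}(G)$, and form the orbit space $\Omega=Z\backslash S$ of the free left-multiplication action, so that $|S|=|Z|\,|\Omega|$ and the desired equality $|S|_p=|Z|_p$ is equivalent to $p\nmid|\Omega|$. Since $S$ is a union of $G$-classes it is invariant under conjugation by $P$, and because $Z$ is central this conjugation commutes with the $Z$-action and therefore descends to an action of the $p$-group $P$ on $\Omega$. The $p$-group orbit formula then gives $|\Omega|\equiv|\mathrm{Fix}_P(\Omega)|\pmod p$, and the heart of the matter is to identify these fixed points.

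The main obstacle is precisely this identification. A coset $Zg$ is $P$-fixed iff $g^x g^{-1}\in Z$ for every $x\in P$; I would then check that the assignment $x\mapsto g^x g^{-1}$ is a homomorphism from $P$ into $Z$ (the centrality of its values makes it multiplicative), with kernel $\mathbf{C}_P(g)$, so that its image is isomorphic to $P/\mathbf{C}_P(g)$ and has order $|g^G|_p$. As $g\in S$ has $p'$ class size, this image is trivial, which forces $g\in\mathbf{C}_N(P)$; conversely every element of $\mathbf{C}_N(P)$ lies in $S$ and is fixed. Hence $\mathrm{Fix}_P(\Omega)=Z\backslash\mathbf{C}_N(P)$ and $|\Omega|\equiv|\mathbf{C}_N(P)|/|Z|\pmod p$, so $p\nmid|\Omega|$ is equivalent to $|\mathbf{C}_N(P)|_p=|Z|_p$.

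To conclude I would identify these two $p$-parts structurally. Any $p$-subgroup of $\mathbf{C}_N(P)=\mathbf{C}_G(P)\cap N$ commutes with $P$ and hence lies in $P$, so it lies in $\mathbf{Z}(P)\cap N$; thus $\mathbf{Z}(P)\cap N$ is the Sylow $p$-subgroup of $\mathbf{C}_N(P)$ and $|\mathbf{C}_N(P)|_p=|\mathbf{Z}(P)\cap N|$. On the other hand $|Z|_p=|(N\cap\mathbf{Z}(G))_p|$, and every central $p$-element of $N$ lies in $\mathbf{Z}(P)\cap N$, so $(N\cap\mathbf{Z}(G))_p\le\mathbf{Z}(P)\cap N$. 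Consequently $|\mathbf{C}_N(P)|_p=|Z|_p$ holds iff $\mathbf{Z}(P)\cap N=(N\cap\mathbf{Z}(G))_p$, that is, iff $\mathbf{Z}(P)\cap N\le\mathbf{Z}(G)$, which is (c). The global assertion then follows because both $|\mathcal{S}_{p'}^G(N)|_p$ and $|N\cap\mathbf{Z}(G)|_p=w^G_N(1)_p$ are read off from the $G$-class size frequency function, and letting $p$ range over the primes dividing $|N|$ recovers exactly the condition that every element of $N$ lying in the centre of some Sylow subgroup of $G$ is central in $G$.
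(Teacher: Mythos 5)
Your overall architecture for (c) is the same as the paper's (coset space of $\mathcal{S}_{p'}^G(N)$ modulo a central subgroup, conjugation action of $P$, orbit counting, then the observation that ${\bf Z}(P)\cap N$ is the Sylow $p$-subgroup of ${\bf C}_N(P)$), and your (a) and (b) are correct reformulations of the paper's ``union of cosets'' argument. But there is one genuine gap, at the heart of the fixed-point identification. You assert that the homomorphism $f\colon P\to Z$, $x\mapsto g^xg^{-1}$, has image ``isomorphic to $P/{\bf C}_P(g)$ and of order $|g^G|_p$.'' The first part is true, but the second does not follow from it: $|P:{\bf C}_P(g)|=|g^P|$ is a $p$-power that in general neither equals nor divides $|g^G|_p$ (it always satisfies $|P:{\bf C}_P(g)|\geq |g^G|_p$, with equality exactly when ${\bf C}_P(g)$ is a Sylow $p$-subgroup of ${\bf C}_G(g)$ --- e.g.\ in $S_3$ with $P=\langle(12)\rangle$ and $g=(13)$ one has $|P:{\bf C}_P(g)|=2$ while $|g^G|_2=1$). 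Worse, for $g\in\mathcal{S}_{p'}^G(N)$ the claim ``the image has order $|g^G|_p=1$'' is literally the statement $P\leq {\bf C}_G(g)$ that you are trying to prove, so as written the step is circular.

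The missing idea is to enlarge the domain of the homomorphism so that Lagrange applies. Let $U=\{u\in G: g^ug^{-1}\in Z\}$; then $U$ is a subgroup containing both $P$ (by the fixedness hypothesis) and ${\bf C}_G(g)$, the map $u\mapsto g^ug^{-1}$ is a homomorphism on $U$ with kernel exactly ${\bf C}_G(g)$, and therefore $|U:{\bf C}_G(g)|$ divides $|G:{\bf C}_G(g)|=|g^G|$, a $p'$-number. Since the image is also a quotient of $U$ containing the $p$-group $P{\bf C}_G(g)/{\bf C}_G(g)$ as a subgroup, that $p$-subgroup must be trivial, forcing $P\leq {\bf C}_G(g)$. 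This is exactly the device the paper uses (its claim ${\bf C}_{G/Z}(nZ)={\bf C}_G(n)/Z$, stated with $Z$ taken to be the $p$-part $P\cap N\cap{\bf Z}(G)$, which streamlines the last step since then the image lies in a $p$-group while its order divides a $p'$-number). With this repair your argument closes up and the remaining steps --- the congruence $|\Omega|\equiv|{\bf C}_N(P)|/|Z| \pmod p$ and the identification $|{\bf C}_N(P)|_p=|{\bf Z}(P)\cap N|$ --- are correct.
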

  \begin{proof} To prove (a), we notice that an element $x\in N$ has $G$-class size a $p'$-number if and only if ${\bf C}_G(x)$ contains a Sylow $p$-subgroup of $G$.
  Thus, $$\mathcal{S}_{p'}^G(N)= \bigcup_{P\in {\rm Syl}_p(G)}{\bf C}_N(P).$$
   Since $N\cap {\bf Z}(G)\subseteq {\bf C}_N(P)$ for every $P\in {\rm Syl}_p(G)$, we conclude that
   $\mathcal{S}_{p'}^G(N)$  is a union of cosets of $N\cap {\bf Z}(G)$, whence we deduce (a).

As ${\bf O}^{p'}(G)$ coincides with the subgroup of $G$ generated by all Sylow $p$-subgroups of $G$, we have
$${\bf C}_N({\bf O}^{p'}(G))= \bigcap_{P\in {\rm Syl}_p(G)}{\bf C}_N(P)\subseteq \mathcal{S}_{p'}^G(N).$$
It follows that $\mathcal{S}_{p'}^G(N)$ is a union of cosets of ${\bf C}_N({\bf O}^{p'}(G))$, and this proves (b).

   For proving (c), we fix a Sylow $p$-subgroup $P$ of $G$ and put $Z:=P\cap N\cap {\bf Z}(G)$, which is a Sylow $p$-subgroup of $N\cap {\bf Z}(G)$ and normal in $G$. Next, we claim that if $n\in \mathcal{S}_{p'}^G(N)$
   then ${\bf C}_{G/Z}(nZ)={\bf C}_G(n)/Z$. Set $U/Z:={\bf C}_{G/Z}(nZ)$. It is clear that ${\bf C}_G(n)\leq U$. Furthermore, if $u\in U$, then we can write $n^u=nf(u)$ for some $f(u)\in Z$.
   Since $Z\leq {\bf Z}(G)$, identically as in the proof of Theorem \ref{th1}, it follows that $f$ is a group   homomorphism from $U$ to $Z$, whose kernel is exactly ${\bf C}_G(n)$.
   As a consequence, $U/{\bf C}_G(n)$ is isomorphic to a subgroup of $Z$. However, $Z$ is a $p$-group whereas $|U/{\bf C}_G(n)|$ divides $|n^G|$, which is a $p'$-number. This forces $U={\bf C}_G(n)$, so the claim is proved.

   We observe now that $\mathcal{S}_{p'}^G(N)$ is also a union of cosets of $Z$ and consider the set $\mathcal{\bar{S}}$ of such cosets and the group $P/Z$ acting by conjugation   on   $\mathcal{\bar{S}}$.
    By using the claim it easily follows that
    a coset $nZ\in {\mathcal{\bar{S}}}$ is fixed by $P/Z$  if and only if $n$ is fixed by $P$, or equivalently, $nZ\in {\bf C}_N(P)/Z$. Now, by applying the orbit formula to this action we get
    $$|\mathcal{\bar{S}}|= |\mathcal{S}_{p'}^G(N)|/|Z|\equiv |{\bf C}_N(P)|/|Z| \quad ({\rm mod } \, p ).$$
        On the other hand, since $|Z|=|N\cap {\bf Z}(G)|_p$ we have that the equality  $|\mathcal{S}_{p'}^G(N)|_p=|Z|=|N\cap {\bf Z}(G)|_p$
      holds if and only if $| \mathcal{S}_{p'}^G(N)|/|Z|$ is a $p'$-number, and this is equivalent to  $|{\bf C}_N(P)|/|Z|$ being a $p'$-number.
      Now, as ${\bf Z}(P)$ is a Sylow $p$-subgroup of ${\bf C}_G(P)$, then ${\bf Z}(P)\cap N$ is a Sylow $p$-subgroup of ${\bf C}_N(P)$. Thus, we deduce that
       $|{\bf C}_N(P)|/|Z|$ is a $p'$-number if and only if
      $Z={\bf Z}(P)\cap N$. Now it is easily seen that this condition is equivalent to the fact that ${\bf Z}(P)\cap N \leq {\bf Z}(G)$, and therefore, the proof of (c) is finished.

      Now, since for every prime $p$, the numbers  $|\mathcal{S}_{p'}^G(N)|_p$ and $|N\cap {\bf Z}(G)|_p$ are provided by the $G$-class size frequency function of $N$, the first assertion of the theorem follows.
\end{proof}

\noindent
{\it Example.} An easy example illustrates that, as indicated in the Introduction, the fact that a normal subgroup is abelian cannot be read off from the set of its $G$-class sizes and their multiplicities. Indeed the same group may possess two normal subgroups showing this. Let $D_{16}=\langle x, y | x^8=y^2=1, x^y =x^{-1}\rangle$ and $D_8$ be the dihedral groups of order $16$ and $8$ respectively. Set $G= D_{16}\times D_8$, or equivalently $G=$ \textsf{SmallGroup}$(128,2011)$ in the library of small groups within  \textsf{GAP} \cite{gap}, and let us consider its normal subgroups  $\langle x\rangle$ and $D_8$. The set of $G$-class sizes of both subgroups, counting multiplicities,  is $\{1,1,2,2,2\}$, whilst $\langle x\rangle$ is abelian and $D_8$ is not. This example also serves to show, as pointed out in the Remark, that  from the $G$-class size frequency function of a normal subgroup $N$ we cannot compute the sizes of  all the terms $N\cap {\bf Z}_i(G)$ for every $i$. Indeed, ${\bf Z}_2(G)=\langle x^2\rangle\times D_8$, so $|\langle x \rangle \cap {\bf Z}_2(G)|= 4$ and $|D_{8}\cap {\bf Z}_2(G)|= 8$.

\bigskip
In view of the above example, we can affirm that the derived length of a solvable normal subgroup  as well as the nilpotency class of a nilpotent one are not determined from the $G$-class size frequency function.

\bigskip
\noindent
{\bf Acknowledgements}  This work is supported by  Proyecto CIAICO/2021/193,  Genera\-litat Valenciana, Spain.

\bigskip
\noindent
{\bf Declarations}

\bigskip
\noindent
{\bf Conflict of interest} The author declares that he has no financial interests.

\bibliographystyle{plain}

\end{document}